\theoremstyle{plain}
\newtheorem{theorem}{Theorem}[section]
\newtheorem{lemma}[theorem]{Lemma}
\theoremstyle{definition}
\newtheorem{definition}{Definition}[section]
\theoremstyle{remark}
\numberwithin{equation}{section}
\begin{document}
\title{Inequalities of Levin-Ste\v{c}kin, Clausing and Chebyshev revisited}

\author[A. Witkowski]{Alfred Witkowski}
\email{alfred.witkowski@utp.edu.pl}
\subjclass[2010]{26D15,26B15}
\keywords{symmetrization, Levin-Ste\v{c}kin inequality, Clausing inequality, Chebyshev inequality, convex function}
\date{4.11.2016}

\begin{abstract}
We prove the Levin-Ste\v{c}kin inequality using Chebyshev's inequality and symmetrization.
Symmetry and slightly modified Chebyshev's inequality are also the key to an elementary proof of Clausing's inequality .
\end{abstract}
\maketitle
\section{Introduction}
It seems that the Levin-Ste\v{c}kin inequality appeared first in an appendix to the Russian edition  of the famous Hardy, Littlewood and P\'olya's Bible on inequalities \cite{HLP_Rus}. The translator (Levin)  enumerates the appendices written by Ste\v{c}kin,  by Levin and by both of them. The inequality we consider here comes from Appendix I written by Ste\v{c}kin. But the English version of the appendix \cite{HLP_APP-En} did not probably make this distinction clear enough, so all inequalities cited in the literature are called Levin-Ste\v{c}kin.

\begin{theorem}[Levin-Ste\v{c}kin's inequality]\label{thm:LS}
Let the function $p\colon (0,1)\to\mathbb{R}$ satisfes the conditions
\begin{enumerate}
	\item $p$ is non-decreasing in $\left(0,\tfrac{1}{2}\right)$,
	\item $p$ is symmetric, i.e. $p(x)=p(1-x)$,
\end{enumerate}
then for every convex function $\varphi$ the inequality
\begin{equation}
\int\limits_0^1p(x)\varphi(x)dx\leq \int\limits_0^1p(x)dx\int\limits_0^1\varphi(x)dx.
\label{eq:LS}
\end{equation}
\end{theorem}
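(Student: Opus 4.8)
The plan is to combine a symmetrization step with Chebyshev's integral inequality, the latter applied only on the half-interval $\left(0,\tfrac12\right)$, where the two factors turn out to be oppositely monotone. The symmetry of $p$ is what makes the reduction to this half-interval possible.

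First I would symmetrize $\varphi$. Set
\[
\psi(x)=\tfrac12\bigl(\varphi(x)+\varphi(1-x)\bigr),
\]
which is again convex, being a convex combination of the convex functions $x\mapsto\varphi(x)$ and $x\mapsto\varphi(1-x)$, and which now satisfies $\psi(x)=\psi(1-x)$. The crucial observation is that replacing $\varphi$ by $\psi$ changes neither side of \eqref{eq:LS}. Indeed, the substitution $u=1-x$ together with the symmetry $p(x)=p(1-x)$ gives $\int_0^1 p(x)\varphi(1-x)\d x=\int_0^1 p(u)\varphi(u)\d u$, so that $\int_0^1 p\psi=\int_0^1 p\varphi$; the same substitution yields $\int_0^1\psi=\int_0^1\varphi$. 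Hence it suffices to prove \eqref{eq:LS} under the additional assumption that $\varphi$ is symmetric.

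Next I would record the monotonicity that makes Chebyshev's inequality applicable. A symmetric convex function is non-increasing on $\left(0,\tfrac12\right)$: for $0<a<b\le\tfrac12$ one writes $b$ as a convex combination of $a$ and $1-a$ and uses $\psi(1-a)=\psi(a)$ to obtain $\psi(b)\le\psi(a)$. On $\left(0,\tfrac12\right)$ hypothesis (1) makes $p$ non-decreasing, so $p$ and $\psi$ are oppositely ordered there, and the reversed form of Chebyshev's inequality on an interval of length $\tfrac12$ gives
\[
\int_0^{1/2}p\,\psi\;\le\;2\int_0^{1/2}p\int_0^{1/2}\psi .
\]

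Finally, since $p$ and $\psi$ are both symmetric about $\tfrac12$, each integral over $(0,1)$ equals twice the corresponding integral over $\left(0,\tfrac12\right)$; multiplying the displayed inequality by $2$ and rewriting each factor as a full integral turns it into exactly \eqref{eq:LS}. I expect the main obstacle to be purely conceptual rather than computational: seeing that symmetrization reduces the problem to a half-interval on which $p$ and $\varphi$ become oppositely monotone, and then invoking Chebyshev's inequality in its reversed (oppositely-ordered) direction. Once the direction of Chebyshev is fixed and the invariance of both sides under symmetrization is verified, the monotonicity claims and the bookkeeping with the factors of $2$ are routine.
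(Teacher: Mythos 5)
Your proposal is correct and takes essentially the same route as the paper: symmetrize $\varphi$ to $\tfrac12\bigl(\varphi(x)+\varphi(1-x)\bigr)$ using the symmetry of $p$, then apply the reversed Chebyshev inequality on $\left(0,\tfrac12\right)$, where $p$ is non-decreasing and the symmetrized convex function is non-increasing. The only differences are cosmetic: the paper proves the symmetric case first as a lemma and symmetrizes afterwards, whereas you symmetrize first, and you spell out the monotonicity of a symmetric convex function, which the paper leaves implicit.
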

The original proof is elementary, but quite complicated. Recently Mercer (\cite{Mercer}) published a proof that uses the notion of extremal points of the set of concave positive  functions satisfying $\int_0^1 f(x)dx\leq 1$. His method, not very elementary, has an advantage: leads to a simple proof of the Clausing inequality.
\begin{theorem}[Clausing's inequality \cite{Cla}]

	Let $p$ be   nonnegative functions on $(0,1)$ satisfying the following conditions:
	\begin{itemize}
		\item $p$  are symmetric (i.e. $p(x)=p(1-x)$),
		\item $p$ is non-decreasing on $[0,1/2]$,
		\end{itemize}
	Then for every  concave, positive function $\varphi$  the inequality
	\begin{equation}
	\int_0^1 p(x)\varphi(x)dx\leq \int_0^1 \varphi(x)dx\int_0^1 4\min\{x,1-x\}p(x)dx
	\end{equation}
	holds.
 \end{theorem}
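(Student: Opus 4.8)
The plan is to mirror the symmetrization-plus-Chebyshev strategy that proves Theorem~\ref{thm:LS}, but to replace ordinary Chebyshev by its weighted variant. First I would symmetrize $\varphi$: setting $\psi(x)=\tfrac12\bigl(\varphi(x)+\varphi(1-x)\bigr)$ produces a symmetric, positive, concave function with $\int_0^1\psi=\int_0^1\varphi$, and, because $p$ is symmetric, $\int_0^1 p\psi=\int_0^1 p\varphi$. Since the factor $\int_0^1 4\min\{x,1-x\}p$ on the right does not involve $\varphi$ at all, it is untouched, so it suffices to prove the inequality for a symmetric $\varphi$. For such $\varphi$ every integrand in sight is symmetric, so I can fold everything onto $[0,\tfrac12]$, where $4\min\{x,1-x\}=4x$; the target then reduces to
\begin{equation*}
\int_0^{1/2}p(x)\varphi(x)\d x\ \le\ 8\left(\int_0^{1/2}\varphi(x)\d x\right)\left(\int_0^{1/2}x\,p(x)\d x\right).
\end{equation*}

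The engine is the weighted Chebyshev inequality: for a nonnegative weight $w$ and functions $f,g$ that are monotone in opposite senses,
\begin{equation*}
\left(\int w\right)\left(\int w f g\right)\ \le\ \left(\int w f\right)\left(\int w g\right),
\end{equation*}
which follows from integrating $w(x)w(y)\bigl(f(x)-f(y)\bigr)\bigl(g(x)-g(y)\bigr)\le 0$ over the square. I would apply this on $[0,\tfrac12]$ with weight $w(x)=x$, first factor $f=p$ (non-decreasing by hypothesis), and second factor $g(x)=\varphi(x)/x$. The relevant moments are $\int_0^{1/2}w=\tfrac18$, $\int_0^{1/2}wg=\int_0^{1/2}\varphi$, and $\int_0^{1/2}wf=\int_0^{1/2}x\,p$, while $\int_0^{1/2}wfg=\int_0^{1/2}p\varphi$; so the weighted inequality is exactly the displayed target, with the constant $8=1/\tfrac18$ coming out automatically.

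The one fact that makes $g$ admissible — and the step I expect to be the real crux — is that $\varphi(x)/x$ is non-increasing whenever $\varphi$ is concave and nonnegative. This is where positivity and concavity of $\varphi$ enter (the Clausing hypothesis, in place of the convexity of Levin-Ste\v{c}kin), and it is precisely what flips the comparison so that $f$ and $g$ run in opposite directions. I would prove it by writing, for $0<x_1<x_2\le\tfrac12$, the point $x_1=\bigl(1-\tfrac{x_1}{x_2}\bigr)\cdot 0+\tfrac{x_1}{x_2}\cdot x_2$ and invoking concavity together with $\varphi(0)\ge 0$:
\begin{equation*}
\varphi(x_1)\ \ge\ \Bigl(1-\tfrac{x_1}{x_2}\Bigr)\varphi(0)+\tfrac{x_1}{x_2}\varphi(x_2)\ \ge\ \tfrac{x_1}{x_2}\varphi(x_2),
\end{equation*}
that is, $\varphi(x_1)/x_1\ge\varphi(x_2)/x_2$. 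Everything else is bookkeeping: the opposite monotonicity of $p$ and $\varphi(x)/x$ drives the weighted Chebyshev step, and unfolding back to $[0,1]$ recovers the stated inequality.
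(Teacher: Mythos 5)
Your proof is correct, and it takes a genuinely different route from the paper's. The paper deduces this statement from its Theorem \ref{thm:Clausing}, a generalization in which $4\min\{x,1-x\}$ is replaced by any symmetric $q$ that is convex on $[0,1/2]$ with $q(0)=0$ and $\int_0^1q(x)dx=1$: after the same symmetrization step, it rewrites the claim as $\int_0^{1/2}[Kq(x)-\varphi(x)]p(x)dx\geq 0$ with $K=\int_0^1\varphi(x)dx$, observes that $Kq-\varphi$ is convex, nonpositive at $0$, and has zero mean on $[0,1/2]$ — hence lies in the class $M^+$ of functions crossing their mean only once — and then applies the paper's one-sided strengthening of Chebyshev's inequality (Theorem \ref{thm:Chebyshev-AW}) with this $f$ against the non-decreasing $p$. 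You instead fold everything onto $[0,1/2]$ and invoke the classical \emph{weighted} Chebyshev inequality with weight $w(x)=x$, pairing the non-decreasing $p$ against the non-increasing ratio $\varphi(x)/x$; your key lemma is the monotonicity of that ratio (concavity plus nonnegativity), and the constant $8$ falls out of $\int_0^{1/2}x\,dx=1/8$. What the paper's route buys is the stronger theorem (arbitrary admissible $q$, and the weaker hypothesis $\varphi(0)+\varphi(1)\geq 0$ in place of positivity) together with the observation that $q_0=4\min\{x,1-x\}$ gives the best bound; what your route buys is that no new Chebyshev variant is needed, only the standard square-integration identity. Two small points to tidy: since $\varphi$ is given only on $(0,1)$, define $\varphi(0)$ as the finite nonnegative limit $\lim_{x\to 0^+}\varphi(x)$ (or run your two-point estimate from a point $\epsilon>0$ and let $\epsilon\to 0$); and although $g(x)=\varphi(x)/x$ may be unbounded near $0$, all four weighted integrals $\int w$, $\int wf$, $\int wg$, $\int wfg$ are finite, so expanding the double integral is legitimate. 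Incidentally, your method recovers the paper's general version as well: for admissible $q$, convexity with $q(0)=0$ makes $q(x)/x$ non-decreasing, so $\varphi(x)/q(x)=\bigl(\varphi(x)/x\bigr)\bigl(x/q(x)\bigr)$ is non-increasing, and weighted Chebyshev with weight $q$ on $[0,1/2]$ yields $\int_0^1p(x)\varphi(x)dx\leq\int_0^1\varphi(x)dx\int_0^1p(x)q(x)dx$.
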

Both inequalities make the reader think of the inequality of Chebyshev, linking the integral of a product of functions with the product of integrals. 
\begin{theorem}[Chebyshev's inequality]
If the functions $f,g:[a,b]\to\mathbb{R}$ are monotone in the same direction, then
$$\frac{1}{b-a}\int_a^b f(x)g(x)dx\geq \frac{1}{b-a}\int_a^b f(x)dx\frac{1}{b-a}\int_a^b g(x)dx$$
The inequality is reversed if the monotonicities are opposite.
\end{theorem}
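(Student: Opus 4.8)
The plan is to prove Chebyshev's inequality by the classical symmetrization trick: introduce an auxiliary variable and exploit the sign of a product of two differences. Concretely, I would consider the double integral
$$
I=\int_a^b\!\!\int_a^b\bigl(f(x)-f(y)\bigr)\bigl(g(x)-g(y)\bigr)\d x\d y
$$
over the square $[a,b]^2$. The whole argument rests on a single pointwise observation: if $f$ and $g$ are monotone in the same direction, then for any pair $(x,y)$ the factors $f(x)-f(y)$ and $g(x)-g(y)$ carry the same sign (both nonnegative when $x\ge y$, both nonpositive when $x\le y$), so their product is nonnegative and hence $I\ge 0$. If the monotonicities are opposite, the factors have opposite signs, the product is nonpositive, and $I\le 0$.

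Next I would expand the integrand as $f(x)g(x)-f(x)g(y)-f(y)g(x)+f(y)g(y)$ and integrate term by term, using Fubini to separate the variables. The two ``diagonal'' terms each contribute $(b-a)\int_a^b f(t)g(t)\d t$, while the two ``mixed'' terms each contribute $\int_a^b f(t)\d t\int_a^b g(t)\d t$. Collecting these gives the clean identity
$$
I=2(b-a)\int_a^b f(x)g(x)\d x-2\int_a^b f(x)\d x\int_a^b g(x)\d x.
$$
Combining this identity with the sign of $I$ established in the first step, and dividing through by the positive quantity $2(b-a)^2$, yields exactly the asserted inequality (with the direction of $\ge$ governed by the common or opposite monotonicity of $f$ and $g$).

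I do not expect any serious obstacle here, since this is a textbook result; the only point requiring a word of care is the sign analysis of $(f(x)-f(y))(g(x)-g(y))$, which must be checked in both the $x\ge y$ and $x\le y$ cases to see that the product has a constant sign over the whole square. Everything else is a routine expansion and an application of Fubini's theorem, both of which are justified by the integrability that monotonicity guarantees on a bounded interval.
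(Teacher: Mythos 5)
Your proof is correct: the sign analysis of $(f(x)-f(y))(g(x)-g(y))$, the expansion of the double integral, and the final division by $2(b-a)^2$ are all sound, and monotone functions on $[a,b]$ are bounded and integrable, so Fubini applies. However, it takes a genuinely different route from the paper. The paper never proves the classical statement directly; instead, in Section 3 it proves a strictly stronger theorem (Theorem \ref{thm:Chebyshev-AW}) in which only $g$ need be monotone, while $f$ is merely required to lie in the class $M^+$ (respectively $M^-$), meaning $f$ crosses its mean value $f^*=\frac{1}{b-a}\int_a^b f(x)dx$ only once, from below to above (respectively above to below). The paper's proof centers $f$ at $f^*$, splits the integral $\int_a^b [f(x)-f^*]g(x)dx$ at the crossing point $c$, and uses monotonicity of $g$ to bound $g$ by the single pivot value $g(c)$ on each piece, so that the two pieces sum to zero. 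The classical Chebyshev inequality then follows because every monotone function belongs to $M^+$ or $M^-$. Your symmetrization argument is cleaner and perfectly symmetric in $f$ and $g$, but it genuinely uses monotonicity of both functions: if $f$ is only assumed to be in $M^+$, the pointwise sign claim for $(f(x)-f(y))(g(x)-g(y))$ fails, so your method cannot be adapted to the paper's generalization. That generalization is not a luxury here — it is exactly what the paper needs later, since in the proof of Clausing's inequality the function $Kq-\varphi$ is not monotone but does lie in $M^+$.
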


Our aim is to give elementary proofs of Levin-Ste\v{c}kin's and Clausing's inequalities. The proofs we offer here are sponsored by the word \textit{symmetrization}.

\section{The Levin-Ste\v{c}kin inequality}
We prove this inequality in two steps: firstly we show that Theorem \ref{thm:LS} is valid for symmetric functions:
\begin{lemma}\label{lem:LS_sym}
	Under the assumptions of Theorem \ref{thm:LS} if $\varphi$ is symmetric and convex, then the inequality \eqref{eq:LS} holds. 
\end{lemma}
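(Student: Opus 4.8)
The plan is to use the extra hypothesis of symmetry to turn this into a one-line application of Chebyshev's inequality on the half-interval $(0,\tfrac12)$. The key preliminary observation is that a convex function $\varphi$ with $\varphi(x)=\varphi(1-x)$ is non-increasing on $(0,\tfrac12)$. Indeed, for $0<x\le y\le\tfrac12$ the point $y$ lies in the segment $[x,1-x]$, so $y=\lambda x+(1-\lambda)(1-x)$ for some $\lambda\in[0,1]$, and convexity together with $\varphi(1-x)=\varphi(x)$ gives
\begin{equation*}
\varphi(y)\le\lambda\varphi(x)+(1-\lambda)\varphi(1-x)=\varphi(x).
\end{equation*}
Hence on $(0,\tfrac12)$ the function $p$ is non-decreasing while $\varphi$ is non-increasing: they are monotone in opposite directions, which is exactly the configuration the reversed Chebyshev inequality wants.

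Next I would apply Chebyshev's inequality (in its reversed form, for opposite monotonicities) to the pair $(p,\varphi)$ on $[0,\tfrac12]$, where the normalizing factor is $\tfrac{1}{1/2}=2$. After clearing this factor one obtains
\begin{equation*}
\int_0^{1/2}p(x)\varphi(x)\,dx\le 2\int_0^{1/2}p(x)\,dx\int_0^{1/2}\varphi(x)\,dx.
\end{equation*}

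Finally I would propagate this to the whole interval by symmetry. Since $p$, $\varphi$, and therefore their product are invariant under $x\mapsto 1-x$, each integral over $(0,1)$ equals twice the corresponding integral over $(0,\tfrac12)$. Substituting the Chebyshev bound, the factors of $2$ line up exactly:
\begin{equation*}
\int_0^1 p\varphi\,dx=2\int_0^{1/2}p\varphi\,dx\le 4\int_0^{1/2}p\,dx\int_0^{1/2}\varphi\,dx=\int_0^1 p\,dx\int_0^1\varphi\,dx,
\end{equation*}
which is precisely \eqref{eq:LS}.

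I do not expect a serious obstacle here: this symmetric case is the easy half of the theorem, and the whole argument is essentially immediate once the monotonicity directions are identified. The only points demanding care are getting the direction of Chebyshev right (the opposite monotonicities force the reversed inequality) and the correct bookkeeping of the normalizing factors $\tfrac{1}{b-a}$, which is what makes the constants on the two sides match with no slack. It is also worth confirming that the hypotheses of Chebyshev genuinely hold, i.e. that $p$ and $\varphi$ are monotone on the closed half-interval, which follows from condition (1) and the convexity argument above. The real work of Theorem \ref{thm:LS} will then lie in the separate symmetrization step that reduces an arbitrary convex $\varphi$ to the symmetric case treated here.
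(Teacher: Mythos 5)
Your proof is correct and follows essentially the same route as the paper: deduce from symmetry and convexity that $\varphi$ is non-increasing on $\left(0,\tfrac12\right)$, apply the reversed Chebyshev inequality to $p$ and $\varphi$ on $\left[0,\tfrac12\right]$, and use symmetry to convert the half-interval integrals into integrals over $(0,1)$. The only difference is cosmetic (you spell out the monotonicity argument and start from $\int_0^1 p\varphi\,dx$ rather than from the product of integrals), so there is nothing to change.
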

\begin{proof}
	Suppose $\varphi$ is convex. Its symmetry implies that it is non-increasing in the interval $\left(0,\tfrac{1}{2}\right)$, and using Chebyshev's inequality we get
	\begin{align*}
	\int\limits_0^1p(x)dx\int\limits_0^1\varphi(x)dx	&=\left(\int\limits_0^{1/2}p(x)dx+\int\limits_{1/2}^1p(x)dx\right)\left(\int\limits_0^{1/2}\varphi(x)dx+\int\limits_{1/2}^1\varphi(x)dx\right)	\\
		&	=4\int\limits_0^{1/2}p(x)dx\int\limits_0^{1/2}\varphi(x)dx\geq 2\int\limits_0^{1/2}p(x)\varphi(x)dx=\int\limits_0^{1}p(x)\varphi(x)dx.\qedhere
	\end{align*}
\end{proof}
Now consider arbitrary $\varphi$.
\begin{proof}[Proof of the Levin-Ste\v{c}kin inequality]
Once more we shall explore the symmetry. Note that for convex $\varphi$ the function $\frac{\varphi(x)+\varphi(1-x)}{2}$ is convex and symmetric, so we can use Lemma \ref{lem:LS_sym} 
\begin{align*}
	\int\limits_0^{1}p(x)\varphi(x)dx&=\int\limits_0^{1}p(x)\frac{\varphi(x)+\varphi(1-x)}{2}dx	\\
	&	\leq \int\limits_0^{1}p(x)dx\int\limits_0^{1}\frac{\varphi(x)+\varphi(1-x)}{2}dx\\
	&=\int\limits_0^{1}p(x)dx\int\limits_0^{1}\varphi(x)dx.
\end{align*}
\end{proof}

\section{Chebyshev's inequality}
To prove the Clausing inequality we need a bit stronger version of  Chebyshev's inequality, where the monotonicity of one function get replaced by a weaker condition. Note that this result is somewhat similar to the result of Brunn \cite{Bru}.
\begin{definition}
	We shall say that an integrable function $f:[a,b]\to\mathbb{R}$ belongs to the class $M^+$ if there is a $c\in [a,b]$ such that
	\begin{enumerate}
		\item if $f(x)<\frac{1}{b-a}\int_a^b f(x)dx$, then $x<c$ and \label{enum:one}
		\item if $f(x)>\frac{1}{b-a}\int_a^b f(x)dx$, then $x>c$. \label{enum:two}
	\end{enumerate}
We say that $f$ belongs to $M^-$ if the inequalities in \eqref{enum:one} and \eqref{enum:two} are reversed.
\end{definition}

Obviously every non-decreasing function belongs to the class $M^+$ an a non-increasing one is a member of $M^-$.
\begin{theorem}\label{thm:Chebyshev-AW}
	If $f,g:[a,b]\to\mathbb{R}$ are integrable, $g$ is non-decreasing and $f\in M^+$ or $g$ is non-increasing and $f\in M^-$, then
	$$\frac{1}{b-a}\int_a^b f(x)g(x)dx\geq \frac{1}{b-a}\int_a^b f(x)dx\frac{1}{b-a}\int_a^b g(x)dx.$$
	Exchanging $M^+$ and $M^-$ toggles the inequality.
\end{theorem}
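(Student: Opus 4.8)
The plan is to reduce the claim to the single-sign statement
$$\int_a^b \bigl(f(x)-\bar f\bigr)\,g(x)\,dx\ge 0,\qquad \bar f:=\frac{1}{b-a}\int_a^b f(x)\,dx,$$
which is precisely the asserted inequality after expanding the product and using $\bar f\int_a^b g=\int_a^b\bar f\,g$. Because $f$ is only assumed to lie in $M^+$ rather than to be monotone, the classical device behind Chebyshev's inequality — integrating the pointwise-nonnegative kernel $(f(x)-f(y))(g(x)-g(y))$ over the square $[a,b]^2$ — is no longer available. The heart of the proof is to replace it by a one-variable argument, and I expect that replacement to be the main obstacle.

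First I would unwind what $f\in M^+$ says about $h:=f-\bar f$. Reading the two defining implications contrapositively gives $f(x)\le\bar f$ whenever $x\le c$ and $f(x)\ge\bar f$ whenever $x\ge c$; equivalently $h\le 0$ on $(a,c)$ and $h\ge 0$ on $(c,b)$. Thus the key consequence of membership in $M^+$ is that the zero-mean function $h$ changes sign exactly once, from nonpositive to nonnegative, at the threshold $c$.

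The decisive step is then to use the vanishing mean $\int_a^b h=0$ to subtract a convenient constant. Taking $\gamma:=g(c)$, monotonicity of $g$ gives $g(x)-\gamma\le 0$ for $x<c$ and $g(x)-\gamma\ge 0$ for $x>c$, and since $\gamma\int_a^b h=0$ we obtain
$$\int_a^b \bigl(f(x)-\bar f\bigr)g(x)\,dx=\int_a^b h(x)\bigl(g(x)-\gamma\bigr)\,dx.$$
On $(a,c)$ both factors are $\le 0$ and on $(c,b)$ both are $\ge 0$, so the integrand is nonnegative throughout and the integral is $\ge 0$, which is exactly the desired inequality.

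It remains to dispose of the other three cases, which are pure sign bookkeeping on the same computation: passing to $M^-$ reverses the sign of $h$ on each side of $c$, and passing to a non-increasing $g$ reverses the sign of $g-\gamma$; a single reversal makes the integrand nonpositive and flips the inequality, while the two reversals together (the case $g$ non-increasing with $f\in M^-$) cancel and restore $\ge$. The only points needing a word of care are the degenerate thresholds $c=a$ and $c=b$, where one of the two sign conditions on $h$ becomes vacuous, but the choice $\gamma=g(c)$ still separates the values of $g$ as required.
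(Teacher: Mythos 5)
Your proof is correct and is essentially the paper's own argument: the paper splits the integral at $c$ and bounds $\int_a^c[f-f^*]g+\int_c^b[f-f^*]g$ below by $g(c)\int_a^b[f-f^*]=0$, which is algebraically the same step as your subtraction of $\gamma=g(c)$ and the pointwise sign analysis of $(f-\bar f)(g-g(c))$. The only (cosmetic) difference is that you make the pointwise nonnegativity of the integrand explicit rather than bounding the two sub-integrals separately.
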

\begin{proof}
	Let $f\in M^+$ and $g$ be non-decreasing (the proof in other cases is similar). Denote $f^*=\frac{1}{b-a}\int_a^b f(x)dx$. Then
	\begin{align*}
		\int_a^b \left[f(x)-f^*\right]g(x)dx&=\int_a^c \left[f(x)-f^*\right]g(x)dx+\int_c^b \left[f(x)-f^*\right]g(x)dx	\\
		&	\geq g(c)\int_a^c \left[f(x)-f^*\right]dx+g(c)\int_c^b \left[f(x)-f^*\right]dx=0.\qedhere
	\end{align*}
\end{proof}

\section{Clausing's inequality}

In this section we present an elementary proof of a generalization of the Clausing inequality. 
\begin{theorem}\label{thm:Clausing}
	Let $p,q$ be   nonnegative functions on $(0,1)$ satisfying the following conditions:
	\begin{itemize}
		\item $p$ and $q$ are symmetric (i.e. $p(x)=p(1-x)$),
		\item $p$ is increasing on $[0,1/2]$,
		\item $q$ is convex on $[0,1/2]$,
		\item $q(0)=0$ and $\int_0^1 q(x)dx=1$.
	\end{itemize}
	Then for every  concave function $\varphi$ with $\varphi(0)+\varphi(1)\geq 0$ the inequality
	\begin{equation}
	\int_0^1 p(x)\varphi(x)dx\leq \int_0^1 \varphi(x)dx\int_0^1 p(x)q(x)dx
	\label{ineq:Clausing}
	\end{equation}
	holds.
 \end{theorem}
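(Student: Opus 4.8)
The plan is to reproduce, for the concave case, the two-move strategy used above for Levin--Ste\v{c}kin: first remove the asymmetry of $\varphi$ by symmetrization, then turn the remaining one-sided estimate into a single application of the strengthened Chebyshev inequality (Theorem~\ref{thm:Chebyshev-AW}). First I would replace $\varphi$ by $\widetilde\varphi(x)=\tfrac12\bigl(\varphi(x)+\varphi(1-x)\bigr)$. Since $p$ and $q$ are symmetric, neither $\int_0^1 p\varphi$ nor $\int_0^1\varphi$ changes under this replacement, and $\int_0^1 pq$ does not see $\varphi$ at all; moreover $\widetilde\varphi$ is concave and symmetric with $\widetilde\varphi(0)=\tfrac12\bigl(\varphi(0)+\varphi(1)\bigr)\ge 0$. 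A symmetric concave function is non-decreasing on $[0,\tfrac12]$, and starting from a nonnegative value it stays nonnegative there, so it is enough to treat a nonnegative, non-decreasing, concave $\varphi$ on $[0,\tfrac12]$. Folding every (symmetric) integrand onto $[0,\tfrac12]$, inequality~\eqref{ineq:Clausing} then reduces to
\[
\int_0^{1/2} p\,\varphi \;\le\; 2\int_0^{1/2}\varphi \;\int_0^{1/2} pq .
\]

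Next I would introduce the centered auxiliary function $h=\varphi-\lambda q$ with $\lambda=2\int_0^{1/2}\varphi$. Because $\int_0^1 q=1$ and $q$ is symmetric we have $\int_0^{1/2}q=\tfrac12$, so this choice of $\lambda$ makes the mean of $h$ over $[0,\tfrac12]$ equal to $0$. The point of $h$ is that the displayed inequality is exactly the statement $\int_0^{1/2} p\,h\le 0$: indeed $\int_0^{1/2}ph=\int_0^{1/2}p\varphi-\lambda\int_0^{1/2}pq$, and $\lambda\int_0^{1/2}pq=2\int_0^{1/2}\varphi\int_0^{1/2}pq$. Two features of $h$ will be decisive: it is concave, since $\varphi$ is concave while $\lambda q$ is convex; and $h(0)=\varphi(0)-\lambda q(0)=\varphi(0)\ge 0$, where the hypothesis $q(0)=0$ is used.

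The heart of the argument is to check that $h$ belongs to the class $M^-$ on $[0,\tfrac12]$ relative to its mean $h^\ast=0$. As $h$ is concave, the superlevel set $\{h\ge 0\}$ is an interval, and it contains the left endpoint because $h(0)\ge 0$; hence $\{h\ge0\}=[0,c]$ for some $c$, so that $h>0$ can occur only for $x<c$ and $h<0$ only for $x>c$. This is precisely the crossing condition defining $M^-$ (with threshold $h^\ast=0$). Applying Theorem~\ref{thm:Chebyshev-AW} with $f=h\in M^-$ and the non-decreasing function $g=p$ then gives, after the sign toggle, $2\int_0^{1/2}ph\le \bigl(2\int_0^{1/2}h\bigr)\bigl(2\int_0^{1/2}p\bigr)=0$, since $\int_0^{1/2}h=0$; this is the required estimate $\int_0^{1/2}ph\le0$.

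I expect the sign analysis of the previous step to be the only real obstacle, and it is where the hypotheses on $q$ are spent: convexity of $q$ makes $h$ concave, $q(0)=0$ pins $h(0)\ge 0$ so that the nonnegative part of $h$ begins at the left endpoint, and the normalization $\int_0^1 q=1$ fixes $\lambda$ so that the relevant threshold is exactly the mean. It is worth stressing that the plain Chebyshev inequality would not suffice, because $h$ itself is in general not monotone; the strengthened version in Theorem~\ref{thm:Chebyshev-AW} is tailored precisely to this single-crossing situation.
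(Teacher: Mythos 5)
Your proof is correct and follows essentially the same route as the paper: symmetrize $\varphi$, reduce to $[0,\tfrac12]$, and apply Theorem~\ref{thm:Chebyshev-AW} to the single-crossing auxiliary function (the paper works with $Kq-\varphi\in M^+$, you with its negative $h=\varphi-\lambda q\in M^-$, which is the same argument up to sign). The only cosmetic differences are the order of the two steps and that you deduce $\lambda\ge 0$ from monotonicity of the symmetrized $\varphi$ rather than from Hermite--Hadamard.
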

\begin{proof}
	Assume first that $\varphi$ is symmetric and denote $\int_0^1\varphi(x)dx=K$. The inequality \eqref{ineq:Clausing} can be rewritten as
	\begin{equation}
	0\leq \int_0^{1/2} [K q(x)-\varphi(x)]p(x)dx.
	\label{ineq:Clausing2}
	\end{equation}
	The Hermite-Hadamard inequality yields $K\geq 0$ and the symmetry of $\varphi$ implies $\varphi(0)\geq 0$, thus the function $K q-\varphi$ is convex,   $K q(0)-\varphi(0)\leq 0$ and $\int_0^{1/2} [K q(x)-\varphi(x)]dx=0$, therefore it belongs to the class $M^+$, and by Theorem \ref{thm:Chebyshev-AW} $\int_0^{1/2} [K q(x)-\varphi(x)]p(x)dx	\ge 0$ 
	which proves \eqref{ineq:Clausing2}.
	
	Now let $\varphi$ be arbitrary. We have
	\begin{align*}
	\int_0^1 p(x)\varphi(x)dx	&=\int_0^1 p(x)\frac{\varphi(x)+\varphi(1-x)}{2}dx	\\
		&	\leq \int_0^1 \frac{\varphi(x)+\varphi(1-x)}{2}dx\int_0^1 p(x)q(x)dx\quad\text{(by \eqref{ineq:Clausing})}\\
		&= \int_0^1 \varphi(x)dx\int_0^1 p(x)q(x)dx
	\end{align*}
	which completes the proof.
\end{proof}

The function $q_0(x)=4\min\{x,1-x\}$ is a borderline between admissible $q$'s and sample functions $\varphi$. Setting $\varphi=q_0$ in \eqref{ineq:Clausing} we obtain
$$\int_0^1 p(x)q_0(x) dx\leq \int_0^1 p(x)q(x) dx$$
which means that $q_0$ provides the best bound in \eqref{ineq:Clausing}.

\end{document}